\documentclass[12pt]{amsart}
\usepackage[margin=1in]{geometry}

\usepackage{amsmath}
\usepackage{amssymb}
\usepackage{hyperref}
\usepackage{xcolor}
\usepackage{colonequals}

\hypersetup{colorlinks=true, linktoc=all, linkcolor=black, citecolor=blue}

\title{Counting metacyclic fields}
\author[M. Allen, J. Dell, M. Fakhari, K. J. McGown, C. Stewart, D. Tedeschi]{Maddie Allen, Justine Dell, Milad Fakhari, Kevin J.~McGown, Chloe Stewart, Daniel Tedeschi}
\date{June 2025}

\newtheorem{theorem}{Theorem}
\newtheorem{lemma}{Lemma}
\newtheorem{prop}{Proposition}

\def\Z{\mathbb{Z}}
\def\Q{\mathbb{Q}}

\DeclareMathOperator{\Gal}{Gal}

\begin{document}

\begin{abstract}
By a metacyclic field $K$ we mean the Galois closure of a pure field
$\Q(\sqrt[\ell]{D})$, $D\in\Z$, of odd prime degree $\ell$.
Let $\mathcal{M}_\ell$ denote the collection of
isomorphism classes of metacyclic fields of degree $\ell(\ell-1)$.
Write $N_\ell(X)=\#\{K\in\mathcal{M}_\ell: |\Delta_K|\leq X\}$
for the associated counting function,
where $\Delta_K$ denotes the discriminant of $K$.
We show
$$
  N_\ell(X)\sim A_\ell X^{\frac{1}{(\ell-1)^2}}(\log X)^{\ell-2}
  \,,
$$
for an explicit constant $A_\ell$.  We express $A_\ell$ as a rational number
times a product over primes of a degree $\ell$ polynomial in $1/p$.
\end{abstract}

\subjclass[2020]{Primary: 11N45. Secondary: 11R29.}

\maketitle

\section{Introduction}\label{Intro}

By a metacyclic field $K$ we mean the Galois closure of a pure field
$\Q(\sqrt[\ell]{D})$, $D\in\Z$, of odd prime degree $\ell$.
In this case, one has
$K=\Q(\sqrt[\ell]{D},\zeta_\ell)$ where $\zeta_\ell$ denotes a primitive $\ell$-th
root of unity.  Moreover, $[K:\Q]=\ell(\ell-1)$ and the Galois group $\Gal(K/\Q)$
is isomorphic to a semidirect product
$\Z/\ell\Z\rtimes \Z/(\ell-1)\Z$.
The pure fields are classical examples of number fields, and
moreover, the cyclic extension $K/\Q(\zeta_\ell)$ furnishes an example of a
Kummer extension; such extensions play a prominent role in algebraic number theory.

In the past few decades there has been an increased interest in the subject of counting number fields.
Recently, Benli (see~\cite{Benli}) has established a counting
theorem for the pure fields of odd prime degree;
in particular, she shows that for fixed $\ell\geq 5$,
the number of such fields with discriminant bounded by $X$
is asymptotic to
$$C_\ell\, X^{\frac{1}{\ell-1}}(\log X)^{\ell-2}\,,$$
for an explicit constant $C_\ell$.
The analogous result for $\ell=3$ was already established by Cohen and Morra (see~\cite{CM}).

To our knowledge no one has yet proved a counting theorem for metacyclic fields.
It is the aim of this paper to establish such a result.
Although the metacyclic fields are obtained from the pure fields by taking Galois closure,
one counting theorem does not immediately follow from the other.
Indeed, additional arguments are needed.
One example of this kind of phenomenon in the literature is as follows:  The $S_3$-sextic
fields are the
Galois closures of the $S_3$-cubic fields;
although the counting results are related, the counting theorem for
the latter (see~\cite{DH, BST, TT}) does not immediately imply the former (see~\cite{BW}).

Fix an odd prime $\ell$.
Let
$\mathcal{M}_\ell$ denote the collection of
(isomorphism classes of)
metacyclic fields of degree $\ell(\ell-1)$.
Write
$$
  N_\ell(X)=\#\{K\in\mathcal{M}_\ell: |\Delta_K|\leq X\}
  \,,
$$
where $\Delta_K$ denotes the discriminant of $K$.

\begin{theorem}\label{T:main}
For odd prime $\ell$, the number $N_\ell(X)$ of metacyclic fields $K$ of discriminant bounded by $X$
satisfies
$$
  N_\ell(X)\sim A_\ell X^{\frac{1}{(\ell-1)^2}}(\log X)^{\ell-2}
  \,,
$$
where
\begin{align*}
    A_\ell
    &=
    a_\ell\prod_{p} \left[ \left( 1 + \frac{\ell-1}{p} \right)\left( 1 - \frac{1}{p} \right)^{\ell-1} \right],
    \\
a_\ell
&=
\frac{(\ell-1)^{-2(\ell-2)}}{(2\ell-1)(\ell-2)!}\left(2\ell^{-\frac{\ell^2-2}{(\ell-1)^2}}+\ell^{-\frac{\ell(\ell-2)}{(\ell-1)^2}}(\ell-1)^{-1}\right).    
\end{align*}
\end{theorem}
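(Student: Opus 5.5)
The plan is to parametrize $\mathcal{M}_\ell$ by $\ell$-th-power-free integers, compute $\Delta_K$ explicitly in terms of $D$, and then reduce $N_\ell(X)$ to a weighted count of squarefree integers handled by a Tauberian argument.

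\textbf{Parametrization.} Every $K\in\mathcal{M}_\ell$ has the form $K=\Q(\zeta_\ell,\sqrt[\ell]{D})$. By Kummer theory over $\Q(\zeta_\ell)$, together with the fact that $\Q^\times/\Q^{\times\ell}\to \Q(\zeta_\ell)^\times/\Q(\zeta_\ell)^{\times\ell}$ is injective, $K$ determines the cyclic subgroup $\langle D\rangle\subset \Q^\times/\Q^{\times\ell}$. Since $\ell$ is odd, $-1$ is an $\ell$-th power, so we may take $D>0$. Each such subgroup has exactly $\ell-1$ generators. Writing $D=\prod_{p\mid m}p^{a_p}$ with $m$ squarefree and $1\le a_p\le \ell-1$, the fields with $\ell$-free radical part $m$ are therefore counted by $(\ell-1)^{\omega(m)-1}$. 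The prime $\ell$ needs separate bookkeeping, and we must ensure the count is not distorted by $\ell$-th powers of $\ell$.

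\textbf{Discriminant.} At a prime $p\ne\ell$ with $p\mid D$, the ramification in $K$ is tame of index $\ell$. The $p$-exponent of $\Delta_K$ is then
$$\ell(\ell-1)\Bigl(1-\tfrac1\ell\Bigr)=(\ell-1)^2,$$
and primes $p\nmid \ell D$ are unramified. Hence
$$|\Delta_K|=\ell^{v(D)}\,m'^{(\ell-1)^2},$$
where $m'$ is the prime-to-$\ell$ part of $\operatorname{rad}(D)$. The exponent $v(D)$ depends only on the $\ell$-adic class of $D$. I would compute it with the conductor--discriminant formula for the cyclic degree-$\ell$ extension $K/\Q(\zeta_\ell)$: the exponent is $\ell(\ell-2)$ from $\Q(\zeta_\ell)^{\ell}$, plus $(\ell-1)$ times the $\ell$-adic valuation of the norm of the Kummer conductor. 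The known local analysis of $x^\ell-D$ over $\Q_\ell(\zeta_\ell)$ should give three cases:
\begin{itemize}
\item $\ell\nmid D$ and $D^{\ell-1}\equiv1\pmod{\ell^2}$: tamely ramified above $\ell$ relative to $\Q(\zeta_\ell)$, giving $v=\ell(\ell-2)$;
\item $\ell\nmid D$ and $D^{\ell-1}\not\equiv1\pmod{\ell^2}$: wildly ramified, giving $v=\ell^2-2$;
\item $\ell\mid D$: also giving $v=\ell^2-2$.
\end{itemize}
This matches the two $\ell$-powers appearing in $a_\ell$. The first case is a proportion $1/\ell$ of the residue classes prime to $\ell$.

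\textbf{Counting.} Set $Y=X^{1/(\ell-1)^2}$. Then $N_\ell(X)$ is a sum over squarefree $m$ coprime to $\ell$ of $(\ell-1)^{\omega(m)-1}$ times the relevant $\ell$-adic weights, with $m\le Y\,\ell^{-v/(\ell-1)^2}$. The Dirichlet series
$$\sum_{(m,\ell)=1}\mu^2(m)(\ell-1)^{\omega(m)}m^{-s}$$
equals $\zeta(s)^{\ell-1}G(s)$ with $G$ holomorphic near $s=1$. Its Euler factor is $(1+(\ell-1)p^{-s})$, and dividing by $\zeta(s)^{\ell-1}$ produces exactly $\prod_p(1+\frac{\ell-1}p)(1-\frac1p)^{\ell-1}$. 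Applying the Selberg--Delange method or the Ikehara--Delange Tauberian theorem with a pole of order $\ell-1$ gives
$$\sum_{m\le Y}\cdots\sim \frac{G(1)}{(\ell-2)!}\,Y(\log Y)^{\ell-2}.$$
Substituting $\log Y=(\ell-1)^{-2}\log X$ yields the factor $(\ell-1)^{-2(\ell-2)}$.

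\textbf{Assembling the constant.} The $\ell$-adic case distinctions have to be made inside the sum. For $\ell\nmid D$, the condition $D^{\ell-1}\equiv1\pmod{\ell^2}$ is a congruence on $D$, not on $m$. So I will sum over exponent vectors $(a_p)$ and use equidistribution of $\prod p^{a_p}$ modulo $\ell^2$. Concretely, insert characters mod $\ell^2$: the nontrivial ones contribute Dirichlet series with lower-order poles. For $\ell\mid D$, an extra factor $\ell^{a}$ with $1\le a\le\ell-1$ multiplies the count. The missing Euler factor at $\ell$ and these weights should combine into the rational factor $\frac{1}{2\ell-1}\bigl(2\ell^{-(\ell^2-2)/(\ell-1)^2}+\ell^{-\ell(\ell-2)/(\ell-1)^2}(\ell-1)^{-1}\bigr)$.

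\textbf{Main obstacle.} I expect the hardest step to be this last bookkeeping, together with the wild $\ell$-adic discriminant computation: getting the exact exponents $\ell^2-2$ and $\ell(\ell-2)$ and the correct densities of each case, including the normalization by $\ell-1$ per field. I would first verify the exponents against the known case $\ell=3$, where they should read $3^7$ versus $3^3$.
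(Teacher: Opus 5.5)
Your route is the paper's route: three $\ell$-adic cases, $|\Delta_K|$ from the conductor of $K/\Q(\zeta_\ell)$, weighted sums over squarefree $n$ prime to $\ell$ with the main term from $\sum_{n\le Y}\mu(n\ell)^2(\ell-1)^{\omega(n)}$, and a lower-order congruence correction modulo $\ell^2$. Your character sum over exponent vectors rederives Mayer's multiplicities $(\ell-1)^{u}a(v)$ and $(\ell-1)^{u}a(v-1)$, which the paper simply quotes. Your use of Selberg--Delange in place of Wirsing is also fine. One small point: the twisted series has no pole at $s=1$ at all, since its prime weights $\ell-1$ and $-1$ average to $0$. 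The real problem is that one of your inputs is wrong, and it is exactly the input that determines the shape of $a_\ell$.

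With your normalization ($m'$ the prime-to-$\ell$ part of $\mathrm{rad}(D)$), the case $\ell\mid D$ does not have $v=\ell^2-2$. There $v_\lambda(D)$ is prime to $\ell$ for $\lambda=1-\zeta_\ell$, so the conductor of $K/\Q(\zeta_\ell)$ has exponent $\ell+1$ at $\lambda$; exponent $2$ belongs to the wild case with $\ell\nmid D$. Hence $v_\ell(\Delta_K)=\ell(\ell-2)+(\ell-1)(\ell+1)=2\ell^2-2\ell-1=(\ell^2-2)+(\ell-1)^2$. For $\ell=3$, $D=3$ one has $\Delta_F=-243$ and $\Delta_K=\Delta_k\Delta_F^2=-3^{11}$, not $3^7$. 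Your proposed $\ell=3$ check only tests the two cases with $\ell\nmid D$.

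If you run your count with $v=\ell^2-2$, the $\ell\mid D$ contribution comes out $\ell$ times too large. Its weight is $(\ell-1)^{\omega(n)}=(\ell-1)\cdot(\ell-1)^{\omega(n)-1}$, the extra $\ell-1$ being the choices of $v_\ell(D)$. The bracket in $a_\ell$ would then be $(\ell+1)\ell^{-\frac{\ell^2-2}{(\ell-1)^2}}+\cdots$ instead of $2\ell^{-\frac{\ell^2-2}{(\ell-1)^2}}+\cdots$.

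So the factor $2$ does not come from two cases sharing one discriminant exponent. It comes from a coincidence between two different cases:
\begin{itemize}
\item For $\ell\mid D$, the cutoff is $Y=(X/\ell^{2\ell^2-2\ell-1})^{1/(\ell-1)^2}$, which carries an extra factor $\ell^{-1}$, but the weight is the full $(\ell-1)^{\omega(n)}$.
\item In the wild case with $\ell\nmid D$, the cutoff uses only $\ell^{\ell^2-2}$, but the weight is $\frac{1}{\ell}(\ell-1)^{\omega(n)}$ up to lower order.
\end{itemize}
This is why the paper has $A_\ell^{(1)}=A_\ell^{(2)}$. With this correction, and with $G(1)=\frac{\ell}{2\ell-1}\prod_p[\cdots]$ coming from the missing Euler factor at $\ell$, the bookkeeping you left as ``should combine'' does produce $a_\ell$.
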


In Section~\ref{Prelim}
we will state some preliminary algebraic results concerning metacyclic fields.
In Section~\ref{Lemmas}
we will prove three lemmas that will serve as the main ingredients for our final result.
In Section~\ref{MainTheorem} we prove Theorem~\ref{T:main} via three cases:
(1) $\ell$ divides $D$;
(2) $\ell\nmid D$ and $D^{\ell-1}\not\equiv 1\pmod{\ell^2}$;
(3) $D^{\ell-1}\equiv 1\pmod{\ell^2}$.
Of course, in the third case, the condition $\ell\nmid D$ is automatic.
The asymptotics obtained in each case are given in
Propositions~\ref{P:1},~\ref{P:2}, and~\ref{P:3} respectively.


\section{Preliminaries}\label{Prelim}

Let $\ell$ be a fixed odd prime.
We adopt the same notation as in Section~\ref{Intro}.
We may assume that $D>1$ and $D$ is free of $\ell$-th powers.
Moreover, let $F=\Q(\sqrt[\ell]{D})$ and $k=\Q(\zeta_\ell)$.
In this way $F$ is a pure field of degree $\ell$, the field $k$ is the cyclotomic field
of degree $\ell-1$, and $K=Fk$ is a metacyclic field of degree $\ell(\ell-1)$.
It is perhaps relevant to note
that even with these restrictions on $D$, each field $F$ is still represented $\ell-1$ times
(see, for example~\cite[Lemma~3.2]{BMPWW}).

We write the prime factorization $D=\ell^a q_1^{e_1}\dots q_t^{e_t}$
where $0\leq a\leq \ell-1$ and $1\leq e_j\leq \ell-1$ for $j=1,\dots, t$.
We denote by $R$ the radical of $D$.
Let $f$ be the conductor associated to the extension $K/k$ via class field theory.
A priori, this conductor is an ideal in $\mathcal{O}_k$, but in the metacyclic situation it is generated
by a single element.  In fact, the conductor takes the form
\begin{equation}\label{conductor}
  f =(1-\zeta_\ell)^e q_1\dots q_t
  \,,\quad
  e\in\{0,\,2,\,\ell+1\}
  \,,
\end{equation} 
where the $q_j$ for $j=1,\dots, t$ 
are pairwise rational primes distinct from $\ell$.
It will be relevant to note that, as ideals, $(1-\zeta_\ell)^{\ell-1}=(\ell)$.
The next two results are Theorem~1 and Theorem~2 of~\cite{Mayer}.

\begin{lemma}\label{L:1}
Notation as above.  The discriminants of $F$, $K$, $k$ are given by
$$
  \Delta_{F}=\Delta_k\, f^{\ell-1}
  \,,\quad
  \Delta_K=\Delta_k^{\ell}\, f^{(\ell-1)^2}
  \,,\quad
  \Delta_k=(-1)^{\frac{\ell-1}{2}}\ell^{\ell-2}
$$
where the associated conductor satisfies
\begin{align*}
f^{\ell-1}=\begin{cases}
\ell^2 R^{\ell-1} & D^{\ell-1}\not\equiv 1\pmod{\ell^2}\,,\\
R^{\ell-1} & D^{\ell-1}\equiv 1\pmod{\ell^2}\,.
\end{cases}
\end{align*}
\end{lemma}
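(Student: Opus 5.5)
The plan is to obtain all three formulas from the relative extension $K/k$, which is a cyclic Kummer extension of degree $\ell$. Its discriminant comes from the conductor–discriminant formula, and the passage to $\Q$ uses the tower formula. First, $\Delta_k=(-1)^{\frac{\ell-1}{2}}\ell^{\ell-2}$ is the classical discriminant of the $\ell$-th cyclotomic field, and I take it as known. The characters of $\Gal(K/k)\cong\Z/\ell\Z$ are the trivial one and $\ell-1$ nontrivial ones. The nontrivial characters are permuted by $\Gal(k/\Q)$, because conjugation by $\Gal(K/\Q)$ acts on $\Gal(K/k)$ through $(\Z/\ell\Z)^\times$ and $f$ is rational. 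Moreover each nontrivial character has the same kernel as the others, hence the same conductor, namely the conductor $f$ of $K/k$. Therefore $\mathfrak d_{K/k}=f^{\ell-1}$, and
$$\Delta_K=\Delta_k^{[K:k]}\,N_{k/\Q}(\mathfrak d_{K/k})=\Delta_k^{\ell}\,N_{k/\Q}(f)^{\ell-1}.$$
Since $f$ has the shape \eqref{conductor}, we get $N_{k/\Q}(f)=f^{\ell-1}$ as ideals of $\Z$, using $N(1-\zeta_\ell)=\ell$, $(1-\zeta_\ell)^{\ell-1}=(\ell)$ and $N(q_j)=q_j^{\ell-1}$. This gives $\Delta_K=\Delta_k^\ell f^{(\ell-1)^2}$, with the sign fixed by the number of complex places.

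For $\Delta_F$ I will use Artin $L$-functions. The group $G=\Gal(K/\Q)$ has a unique irreducible representation $\rho=\mathrm{Ind}_{\Gal(K/k)}^{G}\chi$ of degree $\ell-1$, for any nontrivial $\chi$. We have $\zeta_F=\zeta\cdot L(s,\rho)$ and $\zeta_k=\zeta\cdot\prod_{\psi\neq 1}L(s,\psi)$. By the conductor–discriminant formula, $|\Delta_F|=\mathfrak f(\rho)$. The conductor of an induced representation is $\mathfrak f(\rho)=|\Delta_k|\,N_{k/\Q}(\mathfrak f(\chi))=|\Delta_k|\,f^{\ell-1}$. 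This yields $\Delta_F=\Delta_k f^{\ell-1}$, again with the sign checked via real places. As a consistency check, $\Delta_K=\Delta_k\cdot\Delta_F^{\ell-1}$ up to sign, which agrees with the decomposition of the regular representation.

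It remains to compute $f$ locally, and this is the step I expect to be the main obstacle. At $q_j\neq\ell$ with $1\le e_j\le\ell-1$, the polynomial $x^\ell-D$ is Eisenstein-like after twisting, so $q_j$ ramifies totally and tamely in $K/k$, contributing exponent $1$ at each prime of $k$ above $q_j$. This gives the factor $q_1\cdots q_t$. At $\lambda=(1-\zeta_\ell)$ I will use Hecke's theorem on Kummer extensions $k(\sqrt[\ell]{\alpha})$ with $\lambda$-adic unit $\alpha$. If $\alpha$ is an $\ell$-th power modulo $\lambda^{\ell}$ then $\lambda$ is unramified. Otherwise, if $m<\ell$ is maximal with $\alpha\equiv x^\ell\pmod{\lambda^m}$, the conductor exponent is $\ell+1-m$. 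If $\lambda\mid\alpha$ with valuation prime to $\ell$, the exponent is $\ell+1$.

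Now apply this to $\alpha=D$. If $\ell\mid D$ the exponent is $\ell+1$, and $\lambda^{(\ell+1)(\ell-1)}=\ell^{\ell+1}=\ell^2\cdot\ell^{\ell-1}$ matches $\ell^2R^{\ell-1}$ because $\ell\mid R$. If $\ell\nmid D$, then $D\equiv D^\ell\pmod{\ell}$, so $D$ is an $\ell$-th power mod $\lambda^{\ell-1}$ and $m\ge \ell-1$. The exponent is therefore $2$ or $0$, and it is $0$ exactly when $D$ is an $\ell$-th power mod $\lambda^\ell$. The delicate point is to verify that this last condition is equivalent to $D^{\ell-1}\equiv 1\pmod{\ell^2}$. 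I will show this using that a rational integer prime to $\ell$ which is an $\ell$-th power mod $\lambda^\ell$ is in fact one mod $\ell^2$ in $\Z$, via the structure of $(\mathcal O_k/\lambda^{\ell})^\times$ and Teichmüller lifts. Then $\lambda^{2(\ell-1)}=\ell^2$ produces the case split in the lemma.
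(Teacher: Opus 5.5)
Your proposal is correct. It necessarily differs from the paper, which gives no argument for this lemma and simply imports Theorems~1 and~2 of Mayer.

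You instead derive everything from standard tools:
\begin{itemize}
\item For $\Delta_K$: the tower formula and the conductor--discriminant formula for the cyclic extension $K/k$. Each nontrivial character of $\Gal(K/k)$ is injective, so each has conductor $f$.
\item For $\Delta_F$: the decomposition $\mathrm{Ind}_{\Gal(K/F)}^{G}1=1\oplus\rho$, with $\rho=\mathrm{Ind}_{\Gal(K/k)}^{G}\chi$, together with the conductor formula for induced representations.
\item For $e\in\{0,2,\ell+1\}$: Hecke's conductor theorem for Kummer extensions at $\lambda=(1-\zeta_\ell)$.
\end{itemize}
The citation buys brevity. Your route explains where the three exponents and the $\ell^2$-congruence come from, and it isolates Hecke's theorem as the only nontrivial local input.

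There are a few small points to tighten.

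In the case $\ell\mid D$, say explicitly that $v_\lambda(D)=a(\ell-1)$ is prime to $\ell$ because $1\le a\le\ell-1$. This is where the $\ell$-th-power-free normalization enters.

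The step you flag as delicate is in fact short.
\begin{itemize}
\item Forward direction: if $D\equiv x^\ell\pmod{\lambda^\ell}$ with $x\in\mathcal O_k$, then $x$ is prime to $\lambda$. So $x^{\ell-1}=1+\lambda z$, since the residue field is $\mathbb{F}_\ell$. Every term of $(1+\lambda z)^\ell$ other than $1$ has $\lambda$-valuation at least $\ell$, so $D^{\ell-1}\equiv 1\pmod{\lambda^\ell}$. This holds modulo $\ell^2$ as well, because $\lambda^\ell\cap\Z=\ell^2\Z$.
\item Converse: $D^{\ell-1}\equiv 1\pmod{\ell^2}$ makes $D$ an $\ell$-th power in $(\Z/\ell^2\Z)^\times$. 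Hence it is an $\ell$-th power modulo $\lambda^{2(\ell-1)}\subseteq\lambda^{\ell}$.
\end{itemize}
This is exactly your Teichm\"uller-lift picture: the $\ell$-th power map kills $(1+\lambda)/(1+\lambda^\ell)$.

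Finally, your first justification that all nontrivial characters share the conductor $f$ is unnecessary and reads as circular. That justification was that they are permuted by $\Gal(k/\Q)$ ``because $f$ is rational''. The trivial-kernel observation already suffices.
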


We define the multiplicity of $f$, denoted by $m(f)$,
the number of pure fields $F$ (up to isomorphism)
possessing the same associated conductor $f$.
We also define quantities $u(f),v(f)$ depending on $f$
so that $t=u(f)+v(f)$ as follows:
\begin{align*}
u &= u(f)=\#\{1\leq j\leq t\mid q_j^{\ell-1}\equiv 1\pmod{\ell^2}\}\,,\\
v &= v(f)=\#\{1\leq j\leq t\mid q_j^{\ell-1}\not\equiv 1\pmod{\ell^2}\}\,.
\end{align*}

The next result due to Mayer, gives a formula for $m(f)$ (see~\cite{Mayer}).

\begin{lemma}\label{L:2}
Let $f$ be the conductor associated with a pure field of degree $\ell$.
Then
$$
m(f)=\begin{cases}
(\ell-1)^t\,, &e=\ell+1\,,\\ 
(\ell-1)^{u} a(v)\,, & e=2\,,\\
(\ell-1)^{u} a(v-1)\,, & e=0\,.
\end{cases}
$$
where
\begin{align*}
a(j)&=\frac{1}{\ell}\left((\ell-1)^j-(-1)^j\right)
\,,\quad j \geq -1\,.
\end{align*}
\end{lemma}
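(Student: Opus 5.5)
We would prove Lemma~\ref{L:2} by turning $m(f)$ into a count of vectors over $\mathbb{F}_\ell$. The conductor computation itself (the exponent $e$ in~\eqref{conductor} and Lemma~\ref{L:1}) is taken as given.

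\textbf{Step 1: pure fields as lines in an $\mathbb{F}_\ell$-vector space.} Since $k=\Q(\zeta_\ell)$ contains the $\ell$-th roots of unity, Kummer theory attaches to each $K=k(\sqrt[\ell]{D})$ the subgroup $\langle D\rangle$ of order $\ell$ in $\Q^\times/\Q^{\times\ell}$. Two integers $D,D'$ give isomorphic pure fields exactly when $D'\equiv D^{j}$ modulo $\ell$-th powers for some $j\in\mathbb{F}_\ell^\times$. This is the fact cited from~\cite{BMPWW} that each $F$ is represented $\ell-1$ times. Hence pure fields up to isomorphism correspond to lines in the $\mathbb{F}_\ell$-space spanned by the primes. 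We encode $D=\ell^a q_1^{e_1}\cdots q_t^{e_t}$ by its exponent vector $(a,e_1,\dots,e_t)\in\mathbb{F}_\ell^{t+1}$.

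\textbf{Step 2: which vectors give conductor $f$.} By~\eqref{conductor} and Lemma~\ref{L:1}, the primes $q_j\neq\ell$ dividing $f$ are exactly the primes dividing $D$, so $e_j\neq0$ for all $j$. The exponent $e$ is determined as follows:
\begin{itemize}
\item $e=\ell+1$ exactly when $a\neq0$;
\item $e=2$ when $a=0$ and $D^{\ell-1}\not\equiv1\pmod{\ell^2}$;
\item $e=0$ when $a=0$ and $D^{\ell-1}\equiv1\pmod{\ell^2}$.
\end{itemize}
The map $x\mapsto x^{\ell-1}\bmod \ell^2$ sends $(\Z/\ell^2\Z)^\times$ onto the subgroup $1+\ell\Z/\ell^2\Z$, which is cyclic of order $\ell$. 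It is a homomorphism that kills $\ell$-th powers, so on exponent vectors it induces a linear functional
$$
\lambda(e_1,\dots,e_t)=\sum_j c_je_j\in\mathbb{F}_\ell,
$$
where $c_j=0$ exactly when $q_j^{\ell-1}\equiv1\pmod{\ell^2}$. Thus $u$ coordinates have $c_j=0$ and $v$ coordinates have $c_j\neq0$.

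\textbf{Step 3: the count.} In each case $m(f)$ is the number of admissible vectors divided by $\ell-1$, since $\mathbb{F}_\ell^\times$ acts freely on them. We treat the three cases separately.

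\emph{Case $e=\ell+1$.} All $t+1$ coordinates are free and nonzero. This gives $(\ell-1)^{t+1}/(\ell-1)=(\ell-1)^t$.

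\emph{Cases $e=0,2$.} Here $a=0$ and the $u$ coordinates are free, contributing $(\ell-1)^u$. After rescaling each $e_j$ by $c_j$, the $v$ coordinates must satisfy $\sum x_j=0$ (for $e=0$) or $\sum x_j\neq0$ (for $e=2$), with every $x_j\in\mathbb{F}_\ell^\times$. By the standard character-sum or recursion argument,
$$
N_v=\#\Bigl\{x\in(\mathbb{F}_\ell^\times)^v:\textstyle\sum x_j=0\Bigr\}=\frac{(\ell-1)^v+(\ell-1)(-1)^v}{\ell}.
$$
Dividing by $\ell-1$ gives $N_v/(\ell-1)=a(v-1)$. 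The complementary count is $\bigl((\ell-1)^v-N_v\bigr)/(\ell-1)=a(v)$. Multiplying by $(\ell-1)^u$ yields the stated formula.

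One should check the degenerate case $e=0$, $v=0$. The count is then $(\ell-1)^{u-1}$, which agrees with $(\ell-1)^u a(-1)$. This is why $a(j)$ is defined for $j\ge -1$.

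\textbf{Main obstacle.} The delicate input is the $\ell$-adic part of the conductor, namely that $e$ depends only on $a$ and on $D^{\ell-1}\bmod\ell^2$ as above. This requires a local analysis of the Kummer extension at $(1-\zeta_\ell)$. Here we simply invoke Lemma~\ref{L:1} and~\eqref{conductor}. Given that input, the rest is the linear-algebra count above.
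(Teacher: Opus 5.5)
Your argument is correct. Its route necessarily differs from the paper's, because the paper gives no proof of this lemma: it simply quotes it as Theorem~2 of Mayer. What you supply is a self-contained derivation from two inputs. The first is Lemma~\ref{L:1}. The second is the Kummer-theoretic fact that $\Q(\sqrt[\ell]{D})\cong\Q(\sqrt[\ell]{D'})$ exactly when $\langle D\rangle=\langle D'\rangle$ in $\Q^\times/\Q^{\times\ell}$. To obtain this from Kummer theory over $k$, you need injectivity of $\Q^\times/\Q^{\times\ell}\to k^\times/k^{\times\ell}$, which holds because $[k:\Q]=\ell-1$ is prime to $\ell$. Citing the BMPWW fact, as you do, also suffices.

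The reduction checks out:
\begin{itemize}
\item By Lemma~\ref{L:1}, $e$ depends only on whether $a\neq 0$ and on $D^{\ell-1}\bmod \ell^2$.
\item The residue $D^{\ell-1}\bmod \ell^2$ is your linear functional $\lambda$, since $(\Z/\ell^2\Z)^\times$ has exponent $\ell(\ell-1)$.
\item Every admissibility condition is invariant under $\mathbb{F}_\ell^\times$-scaling, and the action on nonzero vectors is free.
\item The identities $N_v/(\ell-1)=a(v-1)$ and $\bigl((\ell-1)^v-N_v\bigr)/(\ell-1)=a(v)$ hold for all $v\ge 0$. This includes $v=0$, where $a(-1)=1/(\ell-1)$.
\end{itemize}
The only conductor where the formula would be meaningless is $e=0$ with $t=0$. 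It corresponds to $D=1$ and is excluded by hypothesis.

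Compared with the bare citation, your route shows three things. The only arithmetic input is the $\ell$-adic part of the conductor. The quantity $a(j)$ is simply the number of zero-sum vectors in $(\mathbb{F}_\ell^\times)^{j+1}$ divided by $\ell-1$, which explains why $a$ must be defined at $j=-1$. It also accounts directly for the splitting $(\ell-1)^u a(v)=\frac{1}{\ell}\bigl((\ell-1)^{t}-(\ell-1)^u(-1)^v\bigr)$ into a main term and an oscillating term, which the paper exploits in Propositions~\ref{P:2} and~\ref{P:3}.
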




\section{Counting lemmas}\label{Lemmas}

For $n \geq 2$, define
\[ P(\ell; n) := \sum_{p \geq \ell \text{ prime}} \frac{1}{p^n}.\] The main lemma we require is as follows:

\begin{lemma}\label{L:sum}
We have
$$
\sum_{n \leq Y} \mu(n\ell)^2(\ell-1)^{\omega(n)}
\sim
B_\ell
Y(\log Y)^{\ell-2}
\,,
$$
$$
  B_\ell=
  e^{C_\ell}\,\frac{e^{-\gamma(\ell-1)}}{\Gamma(\ell-1)} \left( 1 + \frac{\ell-1}{\ell}\right)^{-1} \prod_{p \leq \ell-1} \left( 1 + \frac{\ell-1}{p} \right)
  \,,
$$
where $\gamma$ is the Euler--Mascheroni constant, $\omega(n)$ denotes the number of distinct prime divisors of $n$, and 
\[ C_{\ell}= (\ell-1)\left(M - \sum_{p\leq \ell-1} \frac{1}{p}\right) +\sum_{n=2}^\infty (-1)^{n+1}\frac{(\ell-1)^n P(\ell;n)}{n}\] for $M$ the Meissel--Mertens constant
\[ M:=\gamma+ \sum_p \left[\log\left(1 - \frac{1}{p} \right) + \frac{1}{p} \right].\]
\end{lemma}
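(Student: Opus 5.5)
The plan is to apply the Selberg--Delange method, or equivalently a Wirsing/Delange-type Tauberian theorem. The summand $g(n)=\mu(n\ell)^2(\ell-1)^{\omega(n)}$ is multiplicative and supported on squarefree $n$ coprime to $\ell$. On primes it takes the values $g(p)=\ell-1$ for $p\neq \ell$ and $g(\ell)=0$, and $g(p^k)=0$ for $k\ge 2$. Hence for $\operatorname{Re} s>1$
$$
F(s)=\sum_{n\ge1}\frac{g(n)}{n^s}=\prod_{p\neq \ell}\left(1+\frac{\ell-1}{p^s}\right)=\zeta(s)^{\ell-1}G(s),
\quad
G(s)=\left(1+\frac{\ell-1}{\ell^s}\right)^{-1}\prod_{p}\left(1+\frac{\ell-1}{p^s}\right)\left(1-\frac{1}{p^s}\right)^{\ell-1}.
$$
Each Euler factor of the last product is $1+O_\ell(p^{-2\sigma})$, so $G$ is holomorphic and bounded in $\operatorname{Re}s\ge 1/2+\varepsilon$. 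The factor $(1+(\ell-1)\ell^{-s})^{-1}$ is harmless there, since $|(\ell-1)\ell^{-s}|<1$ for $\sigma>1/2+\varepsilon$ when $\ell\ge3$. Selberg--Delange with $z=\ell-1$ (a positive integer, so $\zeta^{z}$ causes no branch issues) then gives
$$
\sum_{n\le Y} g(n)\sim \frac{G(1)}{\Gamma(\ell-1)}\,Y(\log Y)^{\ell-2}.
$$
Alternatively, one can avoid contour integration: write $g=d_{\ell-1}*h$ with $\sum|h(n)|n^{-1+\varepsilon}<\infty$, and use $\sum_{n\le Y}d_{\ell-1}(n)\sim Y(\log Y)^{\ell-2}/(\ell-2)!$ together with a routine convolution argument.

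It remains to identify $G(1)/\Gamma(\ell-1)$ with $B_\ell$; I expect this to be the main bookkeeping step. Write
$$
G(1)=\left(1+\tfrac{\ell-1}{\ell}\right)^{-1}\lim_{x\to\infty}\prod_{p\le x}\left(1+\tfrac{\ell-1}{p}\right)\cdot\prod_{p\le x}\left(1-\tfrac1p\right)^{\ell-1}.
$$
The last product is evaluated by Mertens' theorem, $\prod_{p\le x}(1-1/p)^{\ell-1}\sim e^{-\gamma(\ell-1)}(\log x)^{-(\ell-1)}$.

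In the first product, the primes $p\le \ell-1$ are kept as the finite factor $\prod_{p\le\ell-1}(1+(\ell-1)/p)$. For $p\ge \ell$ we have $(\ell-1)/p<1$, so we may expand $\log(1+(\ell-1)/p)=\sum_{n\ge1}(-1)^{n+1}(\ell-1)^np^{-n}/n$. The $n=1$ terms contribute
$$
(\ell-1)\sum_{\ell\le p\le x}\frac1p=(\ell-1)\Bigl(\log\log x+M-\sum_{p\le\ell-1}\frac1p\Bigr)+o(1),
$$
by Mertens' second theorem, with $M$ as defined in the statement. The terms with $n\ge2$ converge absolutely as a double sum, since $\sum_{p\ge\ell}\sum_{n\ge2}((\ell-1)/p)^n/n\ll\sum_{p\ge\ell}(\ell-1)^2p^{-2}\bigl(1-(\ell-1)/\ell\bigr)^{-1}<\infty$. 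We may therefore interchange the sums to obtain $\sum_{n\ge2}(-1)^{n+1}(\ell-1)^nP(\ell;n)/n$, with a tail $o(1)$ as $x\to\infty$.

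Thus $\prod_{p\le x}(1+(\ell-1)/p)\sim e^{C_\ell}(\log x)^{\ell-1}\prod_{p\le\ell-1}(1+(\ell-1)/p)$. Multiplying by the Mertens asymptotic, the powers of $\log x$ cancel and
$$
G(1)=e^{C_\ell}e^{-\gamma(\ell-1)}\left(1+\tfrac{\ell-1}{\ell}\right)^{-1}\prod_{p\le\ell-1}\left(1+\tfrac{\ell-1}{p}\right).
$$
Dividing by $\Gamma(\ell-1)$ gives exactly $B_\ell$. The only delicate points are the justification of the interchange and the separate treatment of $p<\ell$, where the logarithmic series would diverge; both are handled above.
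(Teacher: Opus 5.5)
Your proposal is correct, but it gets the asymptotic by a different route than the paper.

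\textbf{The paper's route.} The paper applies Wirsing's theorem to the nonnegative multiplicative function $g$, using only $\sum_{p\le Y}g(p)\sim(\ell-1)Y/\log Y$. This gives the main term directly in the shape $\frac{e^{-\gamma(\ell-1)}}{\Gamma(\ell-1)}\frac{Y}{\log Y}\prod_{p\le Y,\,p\ne\ell}\bigl(1+\frac{\ell-1}{p}\bigr)$. The $C_\ell$-form of $B_\ell$ then comes from evaluating this partial product with Mertens' second theorem and the Taylor expansion. Lemma~\ref{L:constant} is a separate computation that turns the result into a convergent Euler product.

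\textbf{Your route.} You use that $g(p)=\ell-1$ at every prime but one, factor $F(s)=\zeta(s)^{\ell-1}G(s)$, and invoke Selberg--Delange (or the convolution $g=d_{\ell-1}*h$). This produces the constant at once as $G(1)/\Gamma(\ell-1)$, which is exactly the Euler-product form of Lemma~\ref{L:constant}. Your Mertens computation (second and third theorems plus the same Taylor expansion) then runs the paper's analysis in reverse to recover the $C_\ell$ form.

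\textbf{What each buys.} Your argument proves Lemmas~\ref{L:sum} and~\ref{L:constant} together. Via Selberg--Delange it comes with a relative error $O(1/\log Y)$, and the convolution version is elementary. Wirsing's theorem is more general, needing only averaged prime information and nonnegativity. However, it gives no error term and leaves the constant in the less convenient partial-product form.

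\textbf{One incorrect justification.} In your first paragraph, the quantity $|(\ell-1)\ell^{-s}|=(\ell-1)\ell^{-\sigma}$ is less than $1$ only for $\sigma>\log(\ell-1)/\log\ell$. This threshold exceeds $1/2$ for every $\ell\ge3$ (it is about $0.63$ for $\ell=3$). So the prefactor $(1+(\ell-1)\ell^{-s})^{-1}$ on its own does have poles in $1/2<\sigma<1$.

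The conclusion still holds. Those poles are cancelled by the factor $1+(\ell-1)\ell^{-s}$ in the $p=\ell$ term of your product, so
$G(s)=(1-\ell^{-s})^{\ell-1}\prod_{p\ne\ell}(1+(\ell-1)p^{-s})(1-p^{-s})^{\ell-1}$,
which is holomorphic and bounded in $\sigma\ge1/2+\varepsilon$. Even without this cancellation, $\log(\ell-1)/\log\ell<1$, and a half-plane of holomorphy to the left of $\sigma=1$ is all that Selberg--Delange or the convolution argument needs.
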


\begin{proof}

Let $g(n) = \mu(n\ell)^2(\ell-1)^{\omega(n)}$. Notice that $g(\ell) = 0$ and $g(p) = \ell-1$ for a prime $p \neq \ell$. Thus, the sum over primes is
\[ \sum_{p \leq Y} g(p) = \sum_{\substack{p \leq Y\\ p \neq \ell}} (\ell-1) \sim (\ell-1)\frac{Y}{\log Y} \] by the Prime Number Theorem. Additionally, $g(n)$ is multiplicative and $g(p^k) = 0$ for $k \geq 2$. Thus, the conditions for applying Wirsing's theorem \cite[Satz 1]{Wirsing} are met, and give the asymptotic expression
\begin{equation} \label{eq: Wirsing}
    \sum_{n\leq Y}\mu(n\ell)^2(\ell-1)^{\omega(n)}\sim
\frac{Y}{\log Y}\cdot\frac{e^{-\gamma(\ell-1)}}{\Gamma(\ell-1)}
\prod_{\substack{p\leq Y\\p\neq \ell}}\left(1+\frac{\ell-1}{p}\right).
\end{equation}

We can further analyze the size of the product  appearing in (\ref{eq: Wirsing}). We have that 
        \[ \prod_{p \leq Y} \left( 1 + \frac{\ell-1}{p}\right) = \exp\left({ \sum_{p \leq Y} \log\left(1 + \frac{\ell-1}{p}\right)}\right),\] so it suffices to understand the sum
        \[\sum_{p \leq Y} \log\left(1 + \frac{\ell-1}{p} \right) = \sum_{p \leq \ell-1} \log\left(1 + \frac{\ell-1}{p} \right) +\sum_{\ell-1 < p \leq Y} \log\left(1 + \frac{\ell-1}{p} \right).\] The first sum is a finite number depending only on $\ell$.
        Using the Taylor series expansion for $\log(1 + x)$ near $x = 0$ to analyze the second sum, we obtain 
        \[ \sum_{\ell-1 < p \leq Y} \log\left(1 + \frac{\ell-1}{p}\right) = \sum_{\ell-1 < p \leq Y} \frac{\ell-1}{p} + \sum_{n=2}^\infty (-1)^{n+1} \frac{(\ell-1)^n}{n}\sum_{\ell-1 < p \leq Y} \frac{1}{p^n}.\]
        
Notice that 
\begin{align}
     P(\ell; n) 
    \leq \frac{1}{\ell^n} + \sum_{p \geq \ell+1} \frac{1}{p^n} 
    \leq \frac{1}{\ell^n} + \int_{\ell}^\infty \frac{1}{x^n} \ dx 
    = \frac{1}{\ell^n} - \frac{\ell}{(1-n)\ell^n} \label{eq: P(l, n)-bound}
    \,.
\end{align}
Using partial summation, we obtain
\begin{equation*} P(\ell; n) - \sum_{\ell -1 < p \leq Y} \frac{1}{p^n} = \sum_{p > Y} \frac{1}{p^n} \ll \int_{Y}^\infty \frac{1}{(\log t)t^n} \ dt \ll \frac{1}{(\log Y)Y^{n-1}}.\end{equation*} Using this bound, Mertens' second theorem, and the Taylor expansion of $\log(1+x)$ near $x=0$, we obtain
\begin{align*}
    \sum_{\ell-1 < p \leq Y} \log\left(1 + \frac{\ell-1}{p}\right) &= \sum_{\ell-1 < p \leq Y} \frac{\ell-1}{p} + \sum_{n=2}^\infty (-1)^{n+1} \frac{(\ell-1)^n}{n} \left(P(\ell;n) + O\left( \frac{1}{(\log Y)Y^{n-1}} \right) \right) \\
    &= (\ell-1)\left(\log \log Y +M - \sum_{p\leq \ell-1} \frac{1}{p} \right) + \sum_{n=2}^\infty (-1)^{n+1}\frac{(\ell-1)^n P(\ell;n)}{n} \\
    &\hspace{1cm}+O\left( \frac{1}{\log Y} + \frac{1}{\log Y}\sum_{n=2}^\infty \frac{(\ell-1)^n}{Y^{n-1}} \right) \\
    &= (\ell-1)\left(\log \log Y +M - \sum_{p\leq \ell-1} \frac{1}{p} \right) + \sum_{n=2}^\infty (-1)^{n+1}\frac{(\ell-1)^n P(\ell;n)}{n} \\ &\hspace{1cm}+ O\left( \frac{1}{\log Y}\right),
\end{align*}
where $M$ is the Meissel--Mertens constant. Exponentiating, we obtain
        \[ \prod_{p \leq Y} \left( 1 + \frac{\ell-1}{p}\right) \sim \prod_{p \leq \ell-1} \left( 1 + \frac{\ell-1}{p} \right)e^{C_{\ell}} e^{(\ell-1)\log \log Y} = \prod_{p \leq \ell-1} \left( 1 + \frac{\ell-1}{p} \right)e^{C_{\ell}} (\log Y)^{\ell-1}, \] where 
        \[ C_{\ell} =(\ell-1)\left(M - \sum_{p\leq \ell-1} \frac{1}{p}\right) +\sum_{n=2}^\infty (-1)^{n+1}\frac{(\ell-1)^n P(\ell;n)}{n}. \]
Notice that the sum over $n$ in the expression above indeed converges as a consequence of~(\ref{eq: P(l, n)-bound}). Plugging this expression for $\prod_{p \leq Y} \left( 1 + \frac{\ell-1}{p} \right)$ into (\ref{eq: Wirsing}), we obtain the desired result.
\end{proof}

Before proceeding, we manipulate the constant $B_\ell$ into a more convenient form.

\begin{lemma} \label{L:constant}
    We have 
    \[ B_{\ell} = \frac{1}{(\ell-2)!} \left( 1 + \frac{\ell-1}{\ell} \right)^{-1}\prod_{p} \left[ \left( 1 + \frac{\ell-1}{p} \right)\left( 1 - \frac{1}{p} \right)^{\ell-1} \right].\]
\end{lemma}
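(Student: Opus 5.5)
We start from the expression for $B_\ell$ in Lemma~\ref{L:sum} and rewrite $e^{C_\ell}$ as a convergent Euler product. Since $\Gamma(\ell-1)=(\ell-2)!$ and the factor $\left(1+\frac{\ell-1}{\ell}\right)^{-1}$ already appears in both expressions, it suffices to prove
\[
e^{C_\ell}\,e^{-\gamma(\ell-1)}\prod_{p\le \ell-1}\left(1+\frac{\ell-1}{p}\right)=\prod_p\left(1+\frac{\ell-1}{p}\right)\left(1-\frac1p\right)^{\ell-1}.
\]

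First I would identify the series in $C_\ell$. For $p\ge \ell$ we have $(\ell-1)/p<1$, so the Taylor expansion of $\log(1+x)$ converges absolutely. Summing over $p\ge\ell$ and interchanging sums, which is justified by absolute convergence via~(\ref{eq: P(l, n)-bound}), gives
\[
\sum_{n\ge2}(-1)^{n+1}\frac{(\ell-1)^nP(\ell;n)}{n}=\sum_{p\ge \ell}\left[\log\left(1+\frac{\ell-1}{p}\right)-\frac{\ell-1}{p}\right].
\]
The primes $p>\ell-1$ are exactly the primes $p\ge\ell$, since $\ell-1$ is even and not prime when $\ell>3$, and $2=\ell-1$ is excluded when $\ell=3$.

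Next I would substitute the definition of $M$. This gives $(\ell-1)(M-\gamma)=\sum_p(\ell-1)\left[\log(1-1/p)+1/p\right]$. Combining everything,
\[
C_\ell-\gamma(\ell-1)=\sum_{p}(\ell-1)\left[\log\left(1-\tfrac1p\right)+\tfrac1p\right]-\sum_{p\le \ell-1}\frac{\ell-1}{p}+\sum_{p\ge\ell}\left[\log\left(1+\tfrac{\ell-1}{p}\right)-\tfrac{\ell-1}{p}\right].
\]
The terms $\frac{\ell-1}{p}$ cancel: for $p\le \ell-1$ the term from the first sum cancels the second sum, and for $p\ge\ell$ it cancels the term in the third sum. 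Adding $\sum_{p\le\ell-1}\log\left(1+\frac{\ell-1}{p}\right)$ to both sides then yields
\[
\sum_p\left[\log\left(1+\tfrac{\ell-1}{p}\right)+(\ell-1)\log\left(1-\tfrac1p\right)\right].
\]
Exponentiating gives the claimed Euler product.

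The only delicate point is the rearrangement of conditionally convergent pieces. The series $\sum_p(\ell-1)/p$ diverges, so the cancellation must be carried out termwise inside convergent sums. Each bracket above is $O(1/p^2)$, so every sum is absolutely convergent. I would therefore group terms per prime before summing, and then the argument is routine.
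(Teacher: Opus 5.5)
Your proposal is correct and follows essentially the same route as the paper. Both arguments substitute the definition of $M$, combine the $P(\ell;n)$ series prime by prime with $(\ell-1)\bigl[\log(1-\frac{1}{p})+\frac{1}{p}\bigr]$ so that the divergent $\frac{\ell-1}{p}$ terms cancel, and then exponentiate; the only cosmetic difference is that you first resum the series into $\log(1+\frac{\ell-1}{p})-\frac{\ell-1}{p}$, while the paper expands $\log(1-\frac{1}{p})+\frac{1}{p}$ and notes that the $n=1$ term of the combined double series vanishes.
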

\begin{proof}
    With $C_{\ell}$ defined as in Lemma \ref{L:sum}, plugging in the value of the Meissel--Mertens constant $M$ and expanding $P(\ell; n)$ gives
    \begin{align*}
        C_{\ell}-\gamma(\ell-1) &= -(\ell-1)\sum_{p\leq \ell-1} \frac{1}{p} + (\ell-1) \sum_{p} \left[ \log \left( 1 - \frac{1}{p} \right) + \frac{1}{p} \right] + \sum_{n=2}^\infty (-1)^{n+1}\frac{(\ell-1)^n P(\ell;n)}{n} \\ 
        &= (\ell-1)\sum_{p \leq \ell-1} \log\left( 1 - \frac{1}{p} \right) + \sum_{p > \ell-1} \sum_{n \geq 2} \frac{(-1)^{n+1} (\ell-1)^n - (\ell-1)}{p^nn} \\
        &= (\ell-1)\sum_{p \leq \ell-1} \log\left( 1 - \frac{1}{p} \right) + \sum_{p > \ell-1} \sum_{n \geq 1} \frac{(-1)^{n+1} (\ell-1)^n - (\ell-1)}{p^nn} \\
        &= (\ell-1)\sum_{p \leq \ell-1} \log\left( 1 - \frac{1}{p} \right) + \sum_{p > \ell-1} \left[(\ell-1)\log\left( 1 - \frac{1}{p} \right) + \log\left(1 + \frac{\ell-1}{p} \right) \right]
    \end{align*}
    where the last equality comes from the Taylor series expansion of $\log(1+x)$ near $x=0$. Exponentiating, we obtain
    \[ e^{C_{\ell}-\gamma(\ell-1)} = \prod_{p \leq \ell-1} \left( 1 - \frac{1}{p} \right)^{\ell-1} \prod_{p > \ell-1} \left[ \left( 1 + \frac{\ell-1}{p} \right)\left( 1 - \frac{1}{p} \right)^{\ell-1} \right].\]

    Thus, we have 
    \begin{align*}
        B_\ell&=
  e^{C_\ell}\frac{e^{-\gamma(\ell-1)}}{\Gamma(\ell-1)} \left( 1 + \frac{\ell-1}{\ell}\right)^{-1} \prod_{p \leq \ell-1} \left( 1 + \frac{\ell-1}{p} \right) \\&= \frac{1}{(\ell-2)!} \left( 1 + \frac{\ell-1}{\ell} \right)^{-1}\prod_{p} \left[ \left( 1 + \frac{\ell-1}{p} \right)\left( 1 - \frac{1}{p} \right)^{\ell-1} \right].
\end{align*}
\end{proof}

Finally, we also need the following upper bound.

\begin{lemma}\label{L:sum2}
We have
$$
\sum_{n \leq Y}
\mu(n\ell)^2(\ell-1)^{u(n)}
\ll_\ell
Y(\log Y)^{1-\frac{2}{\ell}}
\,.
$$
\end{lemma}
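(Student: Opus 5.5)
The plan is to treat $g(n):=\mu(n\ell)^2(\ell-1)^{u(n)}$ as a nonnegative multiplicative function, where $u(n)$ counts the primes $q\mid n$ with $q^{\ell-1}\equiv 1\pmod{\ell^2}$, in the same way $u(f)$ was defined. We then apply a general upper bound of Halberstam--Richert type; Wirsing's theorem~\cite{Wirsing} in its upper-bound form would also do. For multiplicative $g\geq 0$ with $g(p)\leq A$ and $g(p^k)$ suitably bounded, this bound reads
$$
\sum_{n\leq Y} g(n)\ll_A \frac{Y}{\log Y}\exp\Bigl(\sum_{p\leq Y}\frac{g(p)}{p}\Bigr).
$$
In our case $g(p^k)=0$ for $k\geq 2$, $g(\ell)=0$, and $g(p)\in\{1,\ell-1\}$ for $p\neq\ell$. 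So the hypotheses hold with $A=\ell-1$, and the problem reduces to estimating $\sum_{p\leq Y} g(p)/p$.

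The key step is the arithmetic input: identify the density of primes $p$ with $p^{\ell-1}\equiv 1\pmod{\ell^2}$. The group $(\Z/\ell^2\Z)^\times$ is cyclic of order $\ell(\ell-1)$. The solutions of $x^{\ell-1}=1$ in it form the unique subgroup of order $\ell-1$, which is a union of $\ell-1$ residue classes out of $\varphi(\ell^2)=\ell(\ell-1)$. By Mertens' theorem for primes in arithmetic progressions with the fixed modulus $\ell^2$ (a consequence of the prime number theorem for progressions), each class $a\bmod \ell^2$ with $\ell\nmid a$ contributes
$$
\sum_{\substack{p\leq Y\\ p\equiv a\ (\ell^2)}}\frac1p=\frac{\log\log Y}{\ell(\ell-1)}+O_\ell(1).
$$
Hence the primes counted by $u$ contribute $\frac{1}{\ell}\log\log Y+O(1)$ to $\sum 1/p$, and the remaining primes contribute $(1-\frac1\ell)\log\log Y+O(1)$.

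It follows that
$$
\sum_{p\leq Y}\frac{g(p)}{p}=\Bigl(\frac{\ell-1}{\ell}+\frac{\ell-1}{\ell}\Bigr)\log\log Y+O_\ell(1)=\Bigl(2-\frac{2}{\ell}\Bigr)\log\log Y+O_\ell(1).
$$
Exponentiating and inserting this into the general bound gives $\sum_{n\le Y}g(n)\ll_\ell Y(\log Y)^{1-2/\ell}$, as claimed.

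The only step needing care is the density computation for the condition $p^{\ell-1}\equiv1\pmod{\ell^2}$ together with the appeal to Mertens in progressions. The rest is a direct application of the standard mean-value upper bound; I would cite Halberstam--Richert's theorem, or alternatively use Rankin's trick with the Euler product $\prod_{p\le Y}(1+g(p)/p)$. If one prefers to avoid the general theorem, the same bound follows from the elementary inequality $\sum_{n\le Y}g(n)\log n\le \sum_{p^k m\le Y} g(m)\,g(p)\log p$, Chebyshev's bound, and induction. That route also yields $\ll Y(\log Y)^{-1}\prod_{p\le Y}(1+g(p)/p)$.
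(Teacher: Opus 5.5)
Your proof is correct, but it uses a different mean-value inequality from the paper.

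You invoke the Halberstam--Richert bound $\sum_{n\le Y}g(n)\ll \frac{Y}{\log Y}\exp\bigl(\sum_{p\le Y}g(p)/p\bigr)$. So you need Mertens in progressions for both kinds of primes, getting $(\log Y)^{2-2/\ell}$ and then saving one $\log Y$.

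The paper instead, via (1.83) of Iwaniec--Kowalski, bounds the sum by $Y\prod_{p\le Y,\,p\in S}\bigl(1+\frac{\ell-2}{p}\bigr)$, where $S$ is the set of primes with $p^{\ell-1}\equiv 1\pmod{\ell^2}$. This amounts to writing $g\le 1*h$, with $h$ supported on squarefree products of primes in $S$ and $h(p)=\ell-2$, and then using $\lfloor Y/d\rfloor\le Y/d$. Since $g(p)\ge 1$ for all $p\ne\ell$, the constant function $1$ absorbs the would-be $1/\log Y$ loss. Only the excess $\ell-2$ on a set of primes of density $1/\ell$ has to be summed, which gives $(\log Y)^{(\ell-2)/\ell}$ directly.

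So the paper's route is completely elementary but depends on $g(p)\ge 1$. Yours rests on a deeper theorem, essentially the $\sum g(n)\log n$ plus Chebyshev argument you sketch at the end. In exchange it would work for any bounded nonnegative multiplicative function.

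One caution about your aside that Rankin's trick with $\prod_{p\le Y}(1+g(p)/p)$ would also suffice: that is not right. Applied to $g$ itself, Rankin's trick only yields $\ll Y\prod_{p\le Y}(1+g(p)/p)\asymp Y(\log Y)^{2-2/\ell}$, a full factor $\log Y$ too large. So the Halberstam--Richert input, or the paper's comparison with $1*h$, is genuinely needed.
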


\begin{proof}
Let $f(n) = \mu(n\ell)^2(\ell-1)^{u(n)}$. Notice that $f$ is multiplicative, that $f(\ell) = 0$, and that for a prime $p \neq \ell$, $f(p) = \ell-1$ if $p^{\ell-1} \equiv 1 \pmod{\ell^2}$ and 1 otherwise. Thus, by (1.83) of \cite{IK}, we have that 
    \[ \sum_{n \leq Y} \mu(n\ell)^2(\ell-1)^{u(n)} \leq Y \prod_{\substack{p \leq Y\\ p^{\ell-1} \equiv 1 \pmod{\ell^2}}} \left( 1 + \frac{\ell-2}{p} \right).\] 
    
    We write 
    \[ \prod_{\substack{p \leq Y\\ p^{\ell-1} \equiv 1 \bmod{\ell^2}}} \left( 1 + \frac{\ell-2}{p} \right) = \exp\left({\sum_{\substack{p \leq Y\\ p^{\ell-1} \equiv 1 \bmod{\ell^2}}} \log\left( 1 + \frac{\ell-2}{p}\right)}\right).\] Using the Taylor series expansion of $\log(1+x)$ around $x=0$, we obtain 
    \begin{align*}
        \sum_{\substack{\ell-2 < p \leq Y\\ p^{\ell-1} \equiv 1 \bmod{\ell^2}}} \log\left( 1 + \frac{\ell-2}{p}\right) &= \sum_{\substack{\ell-2 < p \leq Y\\ p^{\ell-1} \equiv 1 \bmod{\ell^2}}} \frac{\ell-2}{p} + O( 1 ) \\[1ex] &= \frac{(\ell-1)(\ell-2)}{\varphi(\ell^2)}\log\log Y + O(1) \\[1ex]
        &=\frac{\ell-2}{\ell}\log\log Y + O(1),
    \end{align*} 
    where for the second to last equality, we use Mertens' Second Theorem in arithmetic progressions along with the fact that there are $\ell-1$ solutions to $x^{\ell-1} \equiv 1 \pmod{\ell^2}$. Thus, 
    \[ \prod_{\substack{p \leq Y\\ p^{\ell-1} \equiv 1 \bmod{\ell^2}}} \left( 1 + \frac{\ell-2}{p} \right) = \prod_{\substack{p \leq \ell-2\\ p^{\ell-1} \equiv 1 \bmod{\ell^2}}} \left( 1 + \frac{\ell-2}{p} \right) e^{O(1)} (\log Y)^{(\ell-2)/\ell}, \] so we obtain the bound
    \[ \sum_{n \leq Y} \mu(np)^2(\ell-1)^{u(n)} \ll_\ell Y (\log Y)^{(\ell-2)/\ell}.\]
\end{proof}




\section{Proof of Theorem~\ref{T:main}}\label{MainTheorem}

\begin{prop}\label{P:1}
Let $N_\ell^{(1)}(X)$ denote the number of $K\in\mathcal{M}_\ell$ 
with $|\Delta_K|\leq X$ such that $\ell$~divides~$D$.
We have
$$N_\ell^{(1)}(X)\sim A_\ell^{(1)} X^{\frac{1}{(\ell-1)^2}}(\log X)^{\ell-2}$$
where
$$
  A_\ell^{(1)}=B_{\ell}\,\ell^{-\frac{2\ell^2 - 2\ell -1}{(\ell-1)^2}}(\ell-1)^{-2(\ell-2)}
  \,.
$$
\end{prop}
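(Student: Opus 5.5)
The plan is to turn $N_\ell^{(1)}(X)$ into the sum already handled by Lemma~\ref{L:sum}. I would do this using the discriminant formula of Lemma~\ref{L:1} and the multiplicity formula of Lemma~\ref{L:2}.

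First, fix the correspondence being counted. A metacyclic field $K$ is the Galois closure of a pure field $F$, and the $\ell$ conjugate pure subfields of $K$ are all isomorphic. So isomorphism classes of $K\in\mathcal{M}_\ell$ are in bijection with isomorphism classes of pure fields $F$. Hence $N_\ell^{(1)}(X)$ is the number of pure fields $F$, up to isomorphism, with $\ell\mid D$ and $|\Delta_K|\le X$. Since $\ell\mid D$ forces $D^{\ell-1}\not\equiv 1\pmod{\ell^2}$, Lemma~\ref{L:1} gives $f^{\ell-1}=\ell^2R^{\ell-1}$. Here $R=\ell n$ with $n$ squarefree and coprime to $\ell$, so $f^{\ell-1}=\ell^{\ell+1}n^{\ell-1}$; this is exactly the case $e=\ell+1$ in~\eqref{conductor}. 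Conversely, I will use the standard fact, implicit in Mayer's classification, that $e=\ell+1$ occurs precisely when $\ell\mid D$.

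Next, compute the discriminant. Lemma~\ref{L:1} gives
$$
|\Delta_K|=\ell^{\ell(\ell-2)}\bigl(\ell^{\ell+1}n^{\ell-1}\bigr)^{\ell-1}=\ell^{2\ell^2-2\ell-1}\,n^{(\ell-1)^2}.
$$
So the condition $|\Delta_K|\le X$ is equivalent to $n\le Y$, where $Y:=\bigl(X\,\ell^{-(2\ell^2-2\ell-1)}\bigr)^{1/(\ell-1)^2}$.

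Each admissible $n$ (squarefree, $\ell\nmid n$) determines one conductor $f$ with $e=\ell+1$ and $t=\omega(n)$. By Lemma~\ref{L:2}, exactly $m(f)=(\ell-1)^{\omega(n)}$ pure fields have this conductor. Therefore
$$
N_\ell^{(1)}(X)=\sum_{n\le Y}\mu(n\ell)^2(\ell-1)^{\omega(n)}.
$$

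Now apply Lemma~\ref{L:sum} to get $N_\ell^{(1)}(X)\sim B_\ell\,Y(\log Y)^{\ell-2}$. Substituting $Y=\ell^{-\frac{2\ell^2-2\ell-1}{(\ell-1)^2}}X^{\frac1{(\ell-1)^2}}$ and $\log Y\sim (\ell-1)^{-2}\log X$ gives the factor $(\ell-1)^{-2(\ell-2)}$. This yields exactly the stated constant $A_\ell^{(1)}$.

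The main obstacle is the bookkeeping in the bijection step rather than any analysis. One must check three things:
\begin{itemize}
\item $m(f)$ counts isomorphism classes of $F$ (not values of $D$), so that there is no extra factor such as $\ell-1$ from the representation of $F$ by different $D$;
\item every squarefree $n$ coprime to $\ell$ genuinely arises with $e=\ell+1$;
\item the exponent of $\ell$ in $\Delta_K$ is correct.
\end{itemize}
I would verify these against Mayer's theorems directly. Everything after that is a direct substitution into Lemma~\ref{L:sum}.
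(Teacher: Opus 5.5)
Your proposal is correct and follows the paper's proof essentially step for step: Lemma~\ref{L:1} gives $|\Delta_K|=\ell^{2\ell^2-2\ell-1}n^{(\ell-1)^2}$ with $n=R/\ell$, Lemma~\ref{L:2} supplies the weight $(\ell-1)^{\omega(n)}$ for each squarefree $n$ prime to $\ell$, and Lemma~\ref{L:sum} with $\log Y\sim(\ell-1)^{-2}\log X$ yields $A_\ell^{(1)}$. The bookkeeping points you flag are treated the same way (implicitly) in the paper. The converse $e=\ell+1\Rightarrow\ell\mid D$ needs no extra input from Mayer: for $\ell\nmid D$, Lemma~\ref{L:1} already shows that the $\ell$-part of $f^{\ell-1}$ is at most $\ell^2$.
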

        
\begin{proof}
Suppose $\ell\mid D$.  Then
$R = \ell q_1 \dots q_s$ for primes $q_i \neq \ell$.
By Lemma~\ref{L:1},
\begin{align*}
f &= (1-\zeta_\ell)^{\ell+1}(q_1 \dots q_s)
\,,\\
\Delta_K &= (-1)^{(\ell-1)/2}\ell^{2\ell^2-2\ell-1}(q_1 \dots q_s)^{(\ell-1)^2}
\,.
\end{align*}
We have $|\Delta_K|\leq X$ implies
$$
  q_1\dots q_s\leq \left(\frac{X}{\ell^{2\ell^2-2\ell-1}}\right)^{\frac{1}{(\ell-1)^2}}=:Y
  \,.
$$
Thus, counting the number of metacyclic fields with $|\Delta_K|\leq X$ in this case
amounts to counting the number of squarefree numbers $n$ not divisible by $\ell$
such that $n\leq Y$ with the appropriate multiplicity.  Lemma~\ref{L:2} tells us that
in this case, the appropriate multiplicity is $(\ell-1)^{\omega(n)}$.  Thus we find
$$
N_\ell^{(1)}(X)=\sum_{n\leq Y}\mu(n\ell)^2(\ell-1)^{\omega(n)}
\,.
$$
Applying Lemmas~\ref{L:sum} and~\ref{L:constant} proves the result.
\end{proof}



\begin{prop}\label{P:2}
Let $N_\ell^{(2)}(X)$ denote the number of $K\in\mathcal{M}_\ell$ 
with $|\Delta_K|\leq X$ such that $\ell\nmid D$ and $D^{\ell-1}\not\equiv 1\pmod{\ell^2}$.
We have
$$N_\ell^{(2)}(X)\sim A_\ell^{(2)} X^{\frac{1}{(\ell-1)^2}}(\log X)^{\ell-2}$$
where
$$
  A_\ell^{(2)}=B_{\ell}\,\ell^{-\frac{\ell^2-2}{(\ell-1)^2}-1}(\ell-1)^{-2(\ell-2).}
  \,.
$$
\end{prop}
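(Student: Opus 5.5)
The plan mirrors the proof of Proposition~\ref{P:1}. Suppose $\ell\nmid D$ and $D^{\ell-1}\not\equiv 1\pmod{\ell^2}$. Then $R=q_1\cdots q_t$ is a squarefree integer prime to $\ell$, and by Lemma~\ref{L:1} we have $e=2$ and $f^{\ell-1}=\ell^2R^{\ell-1}$. Hence
$$
|\Delta_K|=\ell^{\ell(\ell-2)}\,\ell^{2(\ell-1)}R^{(\ell-1)^2}=\ell^{\ell^2-2}R^{(\ell-1)^2},
$$
so $|\Delta_K|\le X$ is equivalent to
$$
R\le \left(\frac{X}{\ell^{\ell^2-2}}\right)^{\frac{1}{(\ell-1)^2}}=:Y.
$$

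As in Proposition~\ref{P:1}, each $K$ corresponds to exactly one isomorphism class of pure fields $F$, since the degree-$\ell$ subfields of $K$ are all conjugate. Therefore $N^{(2)}_\ell(X)$ is the sum of the multiplicities $m(f)$ over conductors $f=(1-\zeta_\ell)^2 n$, with $n\le Y$ squarefree and prime to $\ell$. Lemma~\ref{L:2} gives $m(f)=(\ell-1)^{u}a(v)$ with $u+v=\omega(n)$. Expanding $a(v)=\frac1\ell\big((\ell-1)^v-(-1)^v\big)$ yields
$$
N^{(2)}_\ell(X)=\frac1\ell\sum_{n\le Y}\mu(n\ell)^2(\ell-1)^{\omega(n)}-\frac1\ell\sum_{n\le Y}\mu(n\ell)^2(\ell-1)^{u(n)}(-1)^{v(n)}.
$$
The case $v=0$ needs no special treatment, because $a(0)=0$ and the formula remains valid there.

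The first sum is handled by Lemmas~\ref{L:sum} and~\ref{L:constant}; it is $\sim B_\ell Y(\log Y)^{\ell-2}$. The second sum is bounded in absolute value by $\sum_{n\le Y}\mu(n\ell)^2(\ell-1)^{u(n)}$. By Lemma~\ref{L:sum2} this is $\ll_\ell Y(\log Y)^{1-2/\ell}$. Since $1-2/\ell<1\le \ell-2$ for $\ell\ge3$, the second sum is negligible.

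This comparison of exponents is the only step that needs care. For $\ell=3$ the main term has exponent $\ell-2=1$ and the error has exponent $1/3$, so a genuine, if small, saving is still available there.

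Finally, $\log Y\sim \frac{1}{(\ell-1)^2}\log X$, so $(\log Y)^{\ell-2}\sim (\ell-1)^{-2(\ell-2)}(\log X)^{\ell-2}$. Also $Y=\ell^{-\frac{\ell^2-2}{(\ell-1)^2}}X^{\frac1{(\ell-1)^2}}$. Combining these with the factor $1/\ell$ gives
$$
N^{(2)}_\ell(X)\sim B_\ell\,\ell^{-\frac{\ell^2-2}{(\ell-1)^2}-1}(\ell-1)^{-2(\ell-2)}X^{\frac1{(\ell-1)^2}}(\log X)^{\ell-2},
$$
which is the stated constant $A^{(2)}_\ell$.
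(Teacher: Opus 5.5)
Your proposal is correct and follows the paper's proof essentially step for step. Both compute $|\Delta_K|=\ell^{\ell^2-2}R^{(\ell-1)^2}$ from Lemma~\ref{L:1} and reduce to the weighted sum $\sum_{n\le Y}\mu(n\ell)^2(\ell-1)^{u(n)}a(v(n))$ via Lemma~\ref{L:2}. Both then expand $a(v)$, take the main term from Lemmas~\ref{L:sum} and~\ref{L:constant}, and treat the signed sum as an error term via Lemma~\ref{L:sum2}. You also make explicit two points the paper leaves implicit: the bijection between metacyclic fields and isomorphism classes of pure fields, and the comparison of exponents showing the error term is negligible.
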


\begin{proof}
Suppose $\ell\nmid D$
and
$D^{\ell-1}\not\equiv 1\pmod{\ell^2}$.
By Lemma~\ref{L:1},
\begin{align*}
|\Delta_K| &= \ell^{\ell^2-2}(q_1 \dots q_s)^{(\ell-1)^2}
\,,
\end{align*}
and hence $|\Delta_K|\leq X$ implies
$$
  q_1\dots q_s\leq \left(\frac{X}{\ell^{\ell^2-2}}\right)^{\frac{1}{(\ell-1)^2}}=:Y
  \,.
$$
As above, counting that number of metacyclic fields with $|\Delta_K|\leq X$ in this case
amounts to counting the number of squarefree numbers $n$ not divisible by $\ell$
such that $n\leq Y$ with the multiplicity
--- given by Lemma~\ref{L:2} --- equal to $(\ell-1)^{u(n)}a(v(n))$. 
Therefore, the contribution in this case is
\begin{align*}
  N_\ell^{(2)}(X)&=\sum_{n\leq Y} \mu(n\ell)^2(\ell-1)^{u(n)}a(v(n))
\\
&=
\frac{1}{\ell}
\sum_{n\leq Y} \mu(n\ell)^2(\ell-1)^{\omega(n)}
-
\frac{1}{\ell}
\sum_{n\leq Y} \mu(n\ell)^2(\ell-1)^{u(n)}
(-1)^{v(n)}
\,.
\end{align*}
Applying Lemmas~\ref{L:sum}, \ref{L:constant}, and~\ref{L:sum2} proves the result.
\end{proof}


\begin{prop}\label{P:3}
Let $N_\ell^{(3)}(X)$ denote the number of $K\in\mathcal{M}_\ell$ 
with $|\Delta_K|\leq X$ such that $D^{\ell-1}\equiv 1\pmod{\ell^2}$.
We have
$$N_\ell^{(3)}(X)\sim A_\ell^{(3)} X^{\frac{1}{(\ell-1)^2}}(\log X)^{\ell-2}$$
where
$$
  A_\ell^{(3)}=B_{\ell}\,\ell^{-\frac{\ell(\ell-2)}{(\ell-1)^2}-1}(\ell-1)^{-2(\ell-2)-1}
  \,.
$$
\end{prop}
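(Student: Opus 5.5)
The plan is to follow the proofs of Propositions~\ref{P:1} and~\ref{P:2}: rewrite $N_\ell^{(3)}(X)$ as a weighted sum over squarefree $n$ coprime to $\ell$, split the Mayer multiplicity into a main part and an oscillating part, and then apply Lemmas~\ref{L:sum}, \ref{L:constant} and~\ref{L:sum2}.

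\textbf{Discriminant and the bound on $n$.} Suppose $D^{\ell-1}\equiv 1\pmod{\ell^2}$. Then $\ell\nmid D$, so $R=q_1\dots q_s$ and the conductor has $e=0$. Lemma~\ref{L:1} gives $f^{\ell-1}=R^{\ell-1}$, hence $f^{(\ell-1)^2}=R^{(\ell-1)^2}$, and $|\Delta_k|^{\ell}=\ell^{\ell(\ell-2)}$. Thus
$$
|\Delta_K|=\ell^{\ell(\ell-2)}(q_1\dots q_s)^{(\ell-1)^2},
$$
and $|\Delta_K|\le X$ is equivalent to
$$
q_1\dots q_s\le \left(X/\ell^{\ell(\ell-2)}\right)^{1/(\ell-1)^2}=:Y.
$$

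\textbf{Rewriting the count.} As before, we count squarefree $n\le Y$ with $\ell\nmid n$, each weighted by the multiplicity of Lemma~\ref{L:2} in the case $e=0$, namely $(\ell-1)^{u(n)}a(v(n)-1)$. Mayer's multiplicity with $e=0$ counts exactly the pure fields with this conductor, which are those with $D^{\ell-1}\equiv1\pmod{\ell^2}$. So no additional congruence condition on $n$ is needed. The term $n=1$ (essentially $D=1$) contributes $O(1)$ and can be discarded. Expanding
$$
a(v-1)=\frac{(\ell-1)^{v-1}+(-1)^{v}}{\ell}
$$
and using $u+v=\omega(n)$, we get
$$
N_\ell^{(3)}(X)=\frac{1}{\ell(\ell-1)}\sum_{n\le Y}\mu(n\ell)^2(\ell-1)^{\omega(n)}
+\frac{1}{\ell}\sum_{n\le Y}\mu(n\ell)^2(\ell-1)^{u(n)}(-1)^{v(n)}+O(1).
$$

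\textbf{Main term and error term.}
\begin{itemize}
\item By Lemma~\ref{L:sum}, the first sum is asymptotic to $B_\ell Y(\log Y)^{\ell-2}$.
\item The second sum is bounded in absolute value by $\sum_{n\le Y}\mu(n\ell)^2(\ell-1)^{u(n)}$. By Lemma~\ref{L:sum2} this is $\ll Y(\log Y)^{1-2/\ell}$, which is $o\big(Y(\log Y)^{\ell-2}\big)$ since $1-2/\ell<\ell-2$ for every odd prime $\ell$ (for $\ell=3$ this reads $1/3<1$).
\end{itemize}

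\textbf{Converting to $X$.} We have $\log Y\sim \log X/(\ell-1)^2$, so $(\log Y)^{\ell-2}\sim(\ell-1)^{-2(\ell-2)}(\log X)^{\ell-2}$. Also $Y=\ell^{-\ell(\ell-2)/(\ell-1)^2}X^{1/(\ell-1)^2}$. Combining these with the prefactor $\frac{1}{\ell(\ell-1)}$ gives
$$
A_\ell^{(3)}=B_\ell\,\ell^{-\frac{\ell(\ell-2)}{(\ell-1)^2}-1}(\ell-1)^{-2(\ell-2)-1},
$$
as claimed. Lemma~\ref{L:constant} then puts $B_\ell$ in product form if desired.

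The only delicate point is the bookkeeping in the multiplicity $a(v-1)$, including the value $a(-1)=1/(\ell-1)$ when $v=0$. The expansion above remains valid there, so the identity holds uniformly in $n$ and no separate case is needed.
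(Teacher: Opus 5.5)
Your proposal is correct and follows essentially the same route as the paper: it uses the same bound $Y=\bigl(X/\ell^{\ell(\ell-2)}\bigr)^{1/(\ell-1)^2}$ from Lemma~\ref{L:1} and the same splitting of $(\ell-1)^{u(n)}a(v(n)-1)$ into $\frac{1}{\ell}(\ell-1)^{\omega(n)-1}$ plus the oscillating term $\frac{1}{\ell}(\ell-1)^{u(n)}(-1)^{v(n)}$, and then finishes with Lemmas~\ref{L:sum}, \ref{L:constant} and~\ref{L:sum2}. Your extra bookkeeping fills in details the paper leaves implicit: discarding the spurious $n=1$ term, checking that $1-2/\ell<\ell-2$ holds even for $\ell=3$, and carrying out the $Y\to X$ conversion explicitly.
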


\begin{proof}
Suppose $D^{\ell-1}\equiv 1\pmod{\ell^2}$.  In particular, $\ell\nmid D$.
By Lemma~\ref{L:1},
\begin{align*}
|\Delta_K| &= \ell^{(\ell-2)\ell}(q_1 \dots q_s)^{(\ell-1)^2}
\,,
\end{align*}
and hence $|\Delta_K|\leq X$ implies
$$
  q_1\dots q_s\leq \left(\frac{X}{\ell^{(\ell-2)\ell}}\right)^{\frac{1}{(\ell-1)^2}}=:Y
  \,.
$$
Therefore, by Lemma~\ref{L:2}, in a similar manner as before,
we have
\begin{align*}
  N_\ell^{(3)}(X)&=\sum_{n\leq Y} \mu(n\ell)^2(\ell-1)^{u(n)}a(v(n)-1)
\\
&=
\frac{1}{\ell}
\sum_{n\leq Y} \mu(n\ell)^2(\ell-1)^{\omega(n)-1}
+
\frac{1}{\ell}
\sum_{n\leq Y} \mu(n\ell)^2(\ell-1)^{u(n)}
(-1)^{v(n)}
\end{align*}
Applying Lemmas~\ref{L:sum}, \ref{L:constant}, and~\ref{L:sum2} proves the result.
\end{proof}


\begin{proof}[Proof of Theorem \ref{T:main}]
The constant $A_{\ell}$ is
\[ A_{\ell} = A_{\ell}^{(1)} + A_{\ell}^{(2)} + A_{\ell}^{(3)}.\] 
Applying Propositions~\ref{P:1}, \ref{P:2}, \ref{P:3} gives the result.    
\end{proof}

\section*{Acknowledgement}

This paper originated with the Rethinking Number Theory workshop
during the Summer of 2025, supported by NSF grant DMS-2418528.

\bibliographystyle{plain}
\bibliography{citations1}

\end{document}